\newtheorem{thm}{Theorem}[section]
\newtheorem{cor}[thm]{Corollary}
\newtheorem{lem}[thm]{Lemma}
\newtheorem{question}[thm]{Question}
\def\gammadelta{\gamma}
\def\eps{\varepsilon}
\begin{document}

\pagestyle{myheadings}
\markright{{\small{\sc Z.~F\"uredi} and {\sc R.~Luo}:  {\sc Monochromatic components in almost complete graphs}
}}

\title{\vspace{-0.5in}
  Large monochromatic components in almost complete graphs
  \\ and
  bipartite graphs}

\author{
{{Zolt\'an F\"uredi}}\thanks{
\footnotesize {Alfr\'ed R\'enyi Institute of Mathematics, Hungary.
E-mail:  \texttt{z-furedi@illinois.edu}. 
Research is supported in part by the Hungarian National Research, Development and Innovation Office grant NKFI-133819. 
}}
\and{{Ruth Luo}}\thanks{University of California, San Diego, La Jolla, CA 92093, USA. E-mail: {\tt ruluo@ucsd.edu}. Research of this author
is supported in part by NSF grant DMS-1902808.
}}
\date{\today}

\maketitle

\vspace{-0.3in}

\begin{abstract}
Gy\'arfas proved that every coloring of the edges of $K_n$ with $t+1$ colors contains a monochromatic connected component of size at least $n/t$. Later, Gy\'arf\'as and S\'ark\"ozy asked for which values of $\gammadelta=\gammadelta(t)$ does the following strengthening for {\em almost} complete graphs hold: if $G$ is an $n$-vertex graph with minimum degree at least $(1-\gammadelta)n$, then every $(t+1)$-edge coloring of $G$ contains a monochromatic component of size at least $n/t$. We show $\gammadelta
= 1/(6t^3)$
suffices, improving a result of DeBiasio, Krueger, and S\'ark\"ozy.

\medskip\noindent
{\bf{Mathematics Subject Classification:}} 05C55, 05C35.  
\end{abstract}

\section{Introduction, a stability of edge colorings} 
Erd\H{o}s and Rado 
observed
that every $2$-edge-coloring of the complete graph $K_n$ has a monochromatic spanning tree.
Generalizing this result, Gy\'arf\'as~\cite{gyarfas} proved that every $(t+1)$-edge-coloring of the edge set $E(K_n)$  
contains a monochromatic connected component of size at least $n/t$.
This bound is the 
best possible when $n$ is divisible by $t^2$ and an affine plane of order $t$ exists.

 Gy\'arf\'as and S\'ark\"ozy~\cite{GS2} 
proved that Gy\'arf\'as' theorem has a remarkable stability property, the complete graph $K_n$  can be replaced with graphs of high minimum degree.

\begin{question}[Gy\'arf\'as and S\'ark\"ozy~\cite{GS2}]\label{q1}
Let $t \geq 2$.
Which values of $\gammadelta=\gammadelta(t)$ guarantee that every $(t+1)$-edge-coloring of any $n$-vertex graph with minimum degree at least $(1-\gammadelta)n$ contains a monochromatic component of size at least $n/t$?
\end{question}

Let $\gammadelta(t)$ denote the best value we can have.
The case for $t=1$ is trivial, $\gammadelta(1)=0$.
It is observed in~\cite{GS1} that {\em any} non-complete graph has a $2$-edge-coloring without a monochromatic spanning tree: if $xy$ is a non-edge, consider any edge-coloring where every edge incident to $x$ is red and every edge incident to $y$ is blue. Then there does not exist monochromatic component containing both $x$ and $y$.

The case for at least three colors (i.e., $t\geq 2$) is more interesting.
Gy\'arf\'as and S\'ark\"ozy~\cite{GS2}  showed $\gammadelta
 \leq 1/(1000t^9)$ suffices. This was
 improved to
 $1/(3072 t^5)$  by DeBiasio, Krueger, and S\'ark\"ozy~\cite{DKS}.

It was also conjectured in~\cite{GS2} that $\gammadelta(t)$ could be as big as $t/(t+1)^2$. This was disproved for $t=2$ by Guggiari and Scott~\cite{guggscott} and by Rahimi~\cite{rahimi}, and more recently for general $t$ by DeBiasio and Krueger~\cite{DK}. The constructions of graphs in~\cite{DK,guggscott, rahimi} are based on modified affine planes. They have minimum degree
at least
$(1-\frac{t-1}{t(t+1)})n-2$ and a $(t+1)$-edge coloring in which
each monochromatic component is of order less than $n/t$.

DeBiasio, Krueger, and S\'ark\"ozy~\cite{DKS} proposed a version for bipartite graphs.

\begin{question}[DeBiasio, Krueger, and S\'ark\"ozy~\cite{DKS}]\label{q2}Let $t \geq 2$ and $n_1\leq n_2$. Determine for which values of $\gammadelta = \gammadelta(t, n_1, n_2)$ the following is true: let $G$ be an $X_1,X_2$-bipartite graph such that $|X_i| = n_i$ for $i \in \{1,2\}$,  for every $x \in X_1$, $d(x)\geq (1 - \gammadelta)n_2$, and for every $y \in X_2$, $d(y) \geq (1 - \gammadelta)n_1$. Then every $t$-edge-coloring of $G$ contains a monochromatic component of order at least $n/t$.
\end{question}

They proved that $\gammadelta(t, n_1, n_2)\leq
   (n_1/n_2)^3/(128t^5)$ suffices.
For both Questions~\ref{q1} and~\ref{q2} the $t=2$ case is solved completely in~\cite{guggscott,rahimi} and~\cite{DKS}, respectively.
They obtained $\gammadelta(2)=1/6$, $\gammadelta(1, n_1, n_2)\geq 1/2$, and $\gammadelta(2, n_1, n_2)\geq 1/3$ (independently of $n$), and these constants are the best possible.
So from now on, we only consider $t\geq 3$.

Our main result is an improvement for the bound on $\gammadelta(t, n_1, n_2)$ in Question~\ref{q2} which in turn implies a better bound for $\gammadelta(t)$ in Question~\ref{q1}.

\begin{thm}\label{main2}Fix integers $t \geq 3$, $n_1, n_2$ such that $n_2 \geq n_1\geq 1$  
  and let $\gammadelta\leq \dfrac{(n_1/n_2)}{t^3}$.
Let $G$ be an $X_1,X_2$-bipartite graph such that $|X_i| = n_i$ for $i \in \{1,2\}$, \smallskip \newline \smallskip
\centerline{for every $x \in X_1$, $d(x)\geq (1 - \gammadelta)n_2$, and for every $y \in X_2$, $d(y) \geq (1 - \gammadelta)n_1$.}
 Then every $t$-edge-coloring of $G$ contains a monochromatic component of order at least $n/t$.
\end{thm}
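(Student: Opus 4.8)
\textbf{Overall strategy.} The plan is to argue by contradiction: suppose $G$ has a $t$-edge-coloring in which every monochromatic component has fewer than $n/t$ vertices, where $n = n_1 + n_2$. The classical Gy\'arf\'as argument for $K_n$ exploits the fact that in a complete (bipartite) host, any two monochromatic components in the \emph{same} color class either coincide or are vertex-disjoint, and that the components of a single color partition the vertex set, so there are at most $t$ of them per color; double counting edges (or vertices) against the affine-plane incidence structure then forces a component of size $\ge n/t$. In the almost-complete setting we cannot get a clean partition, but with minimum degree $(1-\gammadelta)n_2$ on the $X_1$ side and $(1-\gammadelta)n_1$ on the $X_2$ side, each vertex misses only a $\gammadelta$-fraction of the opposite side, so the ``component structure'' is close to a partition up to an error controlled by $\gammadelta$. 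The target $\gammadelta \le (n_1/n_2)/t^3$ tells us how much slack we are allowed.

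\textbf{Key steps.} First, for each color $i \in [t]$, consider the monochromatic components in color $i$; call a vertex $v$ ``$i$-good'' if $v$ lies in a color-$i$ component, which is automatic unless $v$ is incident to no edge of color $i$. I would show that for most vertices, and for a carefully chosen pair of colors, the relevant components behave almost like a partition of each side $X_1$, $X_2$. Concretely, fix a vertex $x \in X_1$; its $\le t$ color classes of incident edges reach $\ge (1-\gammadelta)n_2$ vertices of $X_2$, so by pigeonhole one color $i$ reaches $\ge (1-\gammadelta)n_2/t$ vertices of $X_2$ — giving a color-$i$ component $C$ with $|C \cap X_2| \ge (1-\gammadelta)n_2/t$. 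Symmetrically one finds large components on the $X_1$ side. The heart of the matter is to combine components across the two sides: if a color-$i$ component $C$ has $|C\cap X_2|$ large, then every $x' \in X_1$ sends $\ge (1-\gammadelta)n_2$ edges into $X_2$, hence $\ge (1-2\gammadelta)n_2$ of them land in $C \cap X_2$ unless... — and here one tracks which colors those edges can have. I would set up an auxiliary bipartite ``affine-plane-like'' incidence: index the large components, and show that if no monochromatic component has $\ge n/t$ vertices, then on each side the large components of each color nearly-partition that side, so there are essentially $t$ of them per color of size $\approx n_j/t$; a counting argument (the same linear-algebra/affine-plane flavor as in Gy\'arf\'as's original proof, now with additive error terms of size $O(\gammadelta n t)$) yields that these near-partitions of $X_1$ and of $X_2$ cannot be ``mutually transversal'' for all $t$ colors once $\gammadelta$ is below the stated threshold, producing a contradiction. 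Finally, I would derive the non-bipartite statement (Question~\ref{q1}) from Theorem~\ref{main2} by the standard trick of taking a random balanced bipartition of the $n$-vertex graph, or by a direct imitation with $n_1 = n_2 = n$ folded appropriately; the $1/(6t^3)$ in the abstract should come out of setting $n_1/n_2$ close to $1$ and absorbing a constant loss from the bipartition.

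\textbf{Main obstacle.} The delicate point is that, unlike in $K_n$, two monochromatic components of the same color need not be vertex-disjoint only because a vertex can be incident to edges of every color — so a vertex can lie in up to $t$ components of a single color's worth of... no: each vertex lies in \emph{exactly one} component per color, but a component can fail to ``see'' a vertex entirely. The real difficulty is controlling the error propagation: each step (pigeonhole to find a large component, then extending it across the bipartition, then iterating over colors) loses an additive $\Theta(\gammadelta n)$ or multiplicative $(1-\Theta(\gammadelta))$ factor, and one must ensure these accumulate to something still $< n/t$ in the ``bad'' case — which is exactly why the tolerance degrades like $1/t^3$ rather than $1/t$. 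Getting the cubic (rather than the $t^5$ of~\cite{DKS}) will require being economical: I expect the right move is to \emph{not} iterate naively over all $t$ colors but to identify two colors whose large components already overlap enough to force the conclusion, so that only $O(1)$ error-accumulating steps occur, each costing a factor polynomial in $1/t$ only through the pigeonhole ``$/t$''. Handling the asymmetry $n_1 \le n_2$ (so that the $X_2$-side degree condition $(1-\gammadelta)n_1$ is the binding one) is where the $(n_1/n_2)$ factor in the bound enters, and I would track it explicitly from the first pigeonhole step onward.
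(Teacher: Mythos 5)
There is a genuine gap: what you have written is a plan, not a proof, and the quantitative core that would make the threshold $\gammadelta \le (n_1/n_2)/t^3$ come out is missing. Your pigeonhole step only produces, for each $x\in X_1$, some color-$i$ component $C$ with $|C\cap X_2|\ge (1-\gammadelta)n_2/t$; this by itself contradicts nothing (the bound to beat is $(n_1+n_2)/t$, and you have no control on $|C\cap X_1|$), and the proposal never explains how the two sides get combined. The decisive claim in your sketch --- that if every monochromatic component has fewer than $n/t$ vertices then the large components of each color ``nearly partition'' each side into essentially $t$ parts of size $\approx n_j/t$, and that these near-partitions cannot be mutually transversal below the stated $\gammadelta$ --- is exactly the hard part, and it is asserted rather than proved: no inequality is derived, the error terms are never tracked, and the phrase ``the same affine-plane flavor as Gy\'arf\'as's proof, now with additive error terms'' does not identify a mechanism that yields $1/t^3$ rather than the $1/t^5$ of the earlier work. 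Your own closing paragraph concedes this (``I expect the right move is to identify two colors whose large components already overlap enough''), which is a conjecture about the proof, not a proof.

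For comparison, the paper's argument runs through three concrete estimates that have no counterpart in your proposal: (i) if every component of color $i$ has fewer than $(n_1+n_2)/t$ vertices, then $|E(G^i)|<n_1n_2/t$ (a Jensen/Cauchy--Schwarz-type inequality applied to the component intersections $a_j=|C_j\cap X_1|$, $b_j=|C_j\cap X_2|$); (ii) writing $|E(G^i)|=(1-\eps_i)n_1n_2/t$, a stability lemma showing every color-$i$ component meets $X_\alpha$ in fewer than $n_\alpha/t+\sqrt{\eps_i(t-1)n_1n_2}/t$ vertices, which bounds $d_{G^i}$ of every vertex; and (iii) choosing the color with smallest deficit ($\eps_1\le\gammadelta$, since $\sum_i\eps_i\le t\gammadelta$) and showing the components of that color beyond the $t$ largest carry at most $2\eps_1 n_1$ vertices of $X_1$. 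The contradiction is then purely local: a vertex lying in one of those small components has its degree bounded color by color (summing the $\sqrt{\eps_i}$ terms by concavity), violating the minimum-degree hypothesis precisely when $\gammadelta\le (n_1/n_2)/t^3$. Without some substitute for steps (i)--(iii), or an actually executed version of your ``transversality'' count, the proposal does not establish the theorem.
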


\begin{cor}\label{main}Fix integers $n, t \geq 3$,  
 and let $\gammadelta\leq
     1/(6t^3)$.
Suppose $G$ is an $n$-vertex graph with minimum degree at least $(1 - \gammadelta)n$. Then any coloring of $E(G)$ with $t+1$ colors contains a monochromatic connected component with at least $n/t$ vertices.
\end{cor}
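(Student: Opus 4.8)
The plan is to deduce Corollary~\ref{main} from Theorem~\ref{main2} by a standard reduction: delete one of the $t+1$ colors, split the remaining graph into a near-balanced bipartite graph, and apply the $t$-color bipartite result. The constant $1/(6t^3)$ is exactly what is needed for the hypotheses of Theorem~\ref{main2} to survive this reduction.

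Suppose, toward a contradiction, that $G$ has $n$ vertices, every vertex has degree at least $(1-\gammadelta)n$, and $E(G)$ is $(t+1)$-colored so that every monochromatic component has fewer than $n/t$ vertices. Fix one color, say color $t+1$, and let $P_1,\dots,P_m$ be the vertex sets of the connected components of that color class; these partition $V(G)$, each has $|P_i|<n/t\le n/3$, and $m\ge 2$ (otherwise color $t+1$ would span $G$). A standard greedy argument---add the pieces $P_i$ one at a time, each to whichever side is currently smaller---produces two nonempty groups whose unions $X_1,X_2$ satisfy $|X_2|-|X_1|\le\max_i|P_i|<n/t$, where $X_1$ is the smaller side. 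Write $n_1=|X_1|\le|X_2|=n_2$, so $n_1+n_2=n$, $n_1\ge 1$, $n_2-n_1<n/t$, and hence $n_1/n_2\ge(t-1)/(t+1)$. Let $H$ be the bipartite graph with parts $X_1,X_2$ whose edge set consists of all edges of $G$ with one endpoint in each part, retaining their colors. Since each color-$(t+1)$ component lies entirely inside $X_1$ or entirely inside $X_2$, no edge of $H$ has color $t+1$, so $H$ is $t$-edge-colored.

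Next I would check the minimum-degree hypotheses of Theorem~\ref{main2} for $H$. A vertex $x\in X_1$ has at most $n-(1-\gammadelta)n=\gammadelta n$ non-neighbors in $G$, so at least $n_2-\gammadelta n$ neighbors in $X_2$; symmetrically every $y\in X_2$ has at least $n_1-\gammadelta n$ neighbors in $X_1$. Setting $\gammadelta':=\gammadelta n/n_1$ (which is $\ge\gammadelta n/n_2$ since $n_1\le n_2$), these bounds become $d_H(x)\ge(1-\gammadelta')n_2$ for $x\in X_1$ and $d_H(y)\ge(1-\gammadelta')n_1$ for $y\in X_2$, which is exactly the form required by Theorem~\ref{main2}. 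It remains to verify $\gammadelta'\le(n_1/n_2)/t^3$, i.e.\ $\gammadelta\le n_1^2/(n\,n_2\,t^3)$. Subject to $n_1+n_2=n$ and $0\le n_2-n_1<n/t$, the ratio $n_1^2/(n\,n_2)$ decreases as $n_2-n_1$ increases, hence is at least its value at $n_2-n_1=n/t$, namely $(t-1)^2/\bigl(2t(t+1)\bigr)$. So it suffices that $\tfrac{1}{6t^3}\le\tfrac{(t-1)^2}{2t^4(t+1)}$, which rearranges to $(2t-1)(t-3)\ge 0$ and therefore holds for every $t\ge 3$. Applying Theorem~\ref{main2} to $H$ (with these $n_1,n_2$ and with $\gammadelta'$ in the role of $\gammadelta$) yields a monochromatic component of order at least $(n_1+n_2)/t=n/t$, which is a connected monochromatic subgraph of $G$ on at least $n/t$ vertices---contradicting the assumption.

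The delicate point is the final inequality, which also explains the choice of constant: at $t=3$ one can have three color-$(t+1)$ components each of size roughly $n/3$, so no two-part grouping does better than $n_1/n_2\approx1/2$, and then $\gammadelta\le n_1^2/(n\,n_2\,t^3)\approx1/(6t^3)$ is forced. Accordingly I expect the only real care to be in the greedy balancing step and in carrying the two-sided degree accounting for $H$ through consistently; once $\gammadelta'$ is fixed, the appeal to Theorem~\ref{main2} is immediate.
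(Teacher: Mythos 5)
Your proposal is correct and follows essentially the same route as the paper: delete one color class, group its components greedily into two nearly balanced sides $X_1,X_2$ (difference less than $n/t$), and apply Theorem~\ref{main2} to the resulting $t$-colored bipartite graph. The only difference is bookkeeping: you carry the sharper bound $n_1/n_2\geq (t-1)/(t+1)$ and the exact rescaled parameter $\gammadelta'=\gammadelta n/n_1$, verifying $(2t-1)(t-3)\geq 0$, whereas the paper uses the cruder estimates $n_2<2n_1$, $n_1/n_2>1/2$ and a factor $3\gammadelta$ to reach the same contradiction.
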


Our method is very similar to that in~\cite{GS2} or in~\cite{DKS}.
The major difference is that
 we will first collect a series of inequalities in the next section, apparently unrelated to graphs and graph colorings. But
using these tight inequalities, we prove Theorem~\ref{main2} in Section 3 and Corollary~\ref{main} in Section 4.

We use standard notation.
E.g., $[s]$ stands for the set of integers $\{ 1,2, \dots, s \}$.
The {\em degree} of a vertex $v$ in $G$ is denoted $d_G(v)$ or simply  $d(v)$ when there is no room for ambiguity.

\section{Inequalities}

\begin{lem}\label{I1}
Let $a_1, \ldots, a_s, b_1, \ldots, b_s, E, M, A, B$ be non-negative real numbers such that \newline
--- $\sum_{i=1}^s a_ib_i \geq E$,\newline
--- for all $i \in[s]$, $a_i+b_i \leq M$, \newline
--- $\sum_{i=1}^s a_i \leq A$, \;\; and \;\; $\sum_{i=1}^s b_i \leq B$.\newline
Then $E(A+B) \leq M A B$.
\end{lem}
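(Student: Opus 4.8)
The plan is to reduce the inequality to a homogeneous one involving only $\sum a_i$, $\sum b_i$, and $\sum a_ib_i$, and then prove that by decomposing the relevant quantity into manifestly nonnegative pairwise terms. Write $\alpha=\sum_{i=1}^s a_i$ and $\beta=\sum_{i=1}^s b_i$, so $\alpha\le A$, $\beta\le B$, and $\sum_i a_ib_i\ge E$. The degenerate cases are painless: if $M=0$ then every $a_i=b_i=0$; if $\alpha=0$ then every $a_i=0$, so $\sum_i a_ib_i=0$ and hence $E=0$; and symmetrically if $\beta=0$. In each of these $E(A+B)=0\le MAB$, so from now on assume $M,\alpha,\beta>0$ (hence also $A,B>0$). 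The heart of the matter will be the estimate
\[
  (\alpha+\beta)\sum_{i=1}^s a_ib_i\ \le\ M\alpha\beta .
\]
Granting this, the lemma follows quickly: it gives $E\le\sum_i a_ib_i\le \dfrac{M\alpha\beta}{\alpha+\beta}$, and since $\tfrac1A+\tfrac1B\le\tfrac1\alpha+\tfrac1\beta$, equivalently $\dfrac{\alpha\beta}{\alpha+\beta}\le\dfrac{AB}{A+B}$, we get $E\le\dfrac{MAB}{A+B}$, which is exactly $E(A+B)\le MAB$.

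To prove the displayed estimate I would expand the difference as a double sum,
\[
  M\alpha\beta-(\alpha+\beta)\sum_i a_ib_i=\sum_{i,j}\Bigl(Ma_ib_j-a_ib_i(a_j+b_j)\Bigr),
\]
and regroup it into the diagonal terms $a_ib_i(M-a_i-b_i)\ge0$ (nonnegative since $a_i+b_i\le M$) together with, for each $i<j$, the symmetrized off-diagonal contribution $a_ib_j(M-a_j-b_i)+a_jb_i(M-a_i-b_j)$. Everything thus reduces to the elementary four-variable fact: if $p,q,r,u\ge0$ with $p+q\le M$ and $r+u\le M$, then $pu(M-q-r)+qr(M-p-u)\ge0$. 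This is the one real obstacle, since the factors $M-q-r$ and $M-p-u$ need not both be nonnegative. I would handle it by setting $\lambda=M-q-r$ and $\mu=M-p-u$, noting $\lambda+\mu=2M-(p+q)-(r+u)\ge0$ so at least one of $\lambda,\mu$ is nonnegative, and using the symmetry $(p,q,r,u)\mapsto(q,p,u,r)$ (which preserves the hypotheses and interchanges $\lambda$ and $\mu$) to reduce to $\lambda\ge0\ge\mu$, the case $\lambda,\mu\ge0$ being immediate. When $\mu<0$ we have $p+u>M$, which combined with $p+q\le M$ and $r+u\le M$ forces $u>q$ and $p>r$, hence $pu\ge qr\ge0$; then $\lambda\ge-\mu\ge0$ gives $pu\lambda\ge pu(-\mu)\ge qr(-\mu)$, i.e.\ $pu\lambda+qr\mu\ge0$.

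Summing the nonnegative diagonal and pairwise contributions then yields the core estimate $(\alpha+\beta)\sum_i a_ib_i\le M\alpha\beta$, after which the harmonic-mean comparison of the first paragraph completes the proof. Apart from the four-variable inequality, the only things requiring care are the bookkeeping in the double-sum expansion and the (trivial) degenerate cases.
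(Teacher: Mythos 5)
Your proof is correct, and while it shares the same skeleton as the paper's --- both arguments reduce, via $E\le\sum_i a_ib_i$, $\sum_i a_i\le A$, $\sum_i b_i\le B$ and a harmonic-mean comparison, to the homogeneous core estimate $(\alpha+\beta)\sum_i a_ib_i\le M\alpha\beta$ --- the way you establish that core estimate is genuinely different. The paper gets it in two lines from Jensen's inequality for $x^2$ (equivalently weighted Cauchy--Schwarz): $\bigl(\sum_i a_ib_i\bigr)^2\le\bigl(\sum_i b_i\bigr)\bigl(\sum_i a_i^2b_i\bigr)$ and its twin, so that $\bigl(\sum_i a_ib_i\bigr)^2\bigl(\tfrac1\alpha+\tfrac1\beta\bigr)\le\sum_i a_ib_i(a_i+b_i)\le M\sum_i a_ib_i$. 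You instead expand $M\alpha\beta-(\alpha+\beta)\sum_i a_ib_i$ as a double sum and show each diagonal and each symmetrized off-diagonal contribution is nonnegative, which forces the four-variable case analysis around the signs of $M-q-r$ and $M-p-u$; I checked that analysis and it is sound (the key points $\lambda+\mu\ge0$, and $pu\ge qr$ when $\mu<0$, are exactly right). What your route buys is a completely elementary, convexity-free argument that exhibits the inequality as a sum of nonnegative pairwise terms; what it costs is length and the somewhat delicate sign discussion, where the paper's convexity step makes the same per-index bound $a_i+b_i\le M$ do all the work in one stroke.
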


\begin{proof}
The case $EAB=0$ is easy, so we may suppose $A, B, E>0$.
Apply
 Jensen's inequality for the convex function $x^2$
\[\left( \frac{ \sum_{i=1}^s b_i a_i}{\sum_{i=1}^s b_i}\right)^2 \leq \frac{\sum_{i=1}^s b_i a_i^2}{ \sum_{i=1}^s b_i}. \]
Therefore
\[ \frac{(\sum_{i=1}^s a_i b_i)^2}{\sum_{i=1}^s b_i} \leq \sum_{i=1}^s a_i^2 b_i,\] and similarly $\frac{(\sum_{i=1}^s a_i b_i)^2}{\sum_{i=1}^s a_i} \leq \sum_{i=1}^s a_i b_i^2$.
So we have
\begin{eqnarray*}
E \sum_{i=1}^s a_ib_i \left(\frac{1}{A} + \frac{1}{B} \right) & \leq & \left(\sum_{i=1}^s a_i b_i \right)^2 \left(\frac{1}{\sum_{i=1}^s a_i} + \frac{1}{\sum_{i=1}^s b_i}\right)\\
& \leq & \sum_{i=1}^s a_i^2 b_i + a_ib_i^2 \;\; = \;\; \sum_{i=1}^s (a_ib_i) (a_i + b_i)
\leq M \sum_{i=1}^s a_ib_i.
\end{eqnarray*}

Dividing by $(\sum_{i=1}^s a_ib_i)$ and simplifying, we have $E(A^{-1} + B^{-1}) = E(A + B)/(AB) \leq M$.
\end{proof}

\begin{lem}\label{I5}Fix $n_1, n_2, t, a_1, \ldots, a_s, b_1, \ldots, b_s \geq 0$, $\eps \geq 0$.
Suppose $t>1$, 
$n_1, n_2 >0$, \newline
---
$\sum_{i =1}^s a_ib_i \geq (1 - \eps) \frac{n_1n_2}{t}$,\newline
---
$\sum_{i=1}^s a_i \leq n_1$, \;\; $\sum_{i=1}^s b_i \leq n_2$, and 
\newline
--- $a_i+ b_i < (n_1+n_2)/t$ for all $i \in [s]$.
Then for all $i \in [s]$,

\begin{equation}\label{Lemma2.2}
  a_i < \frac{n_1}{t} + \frac{\sqrt{\eps (t-1) n_1n_2}}{t}\;\;\text{and} \;\; b_i < \frac{n_2}{t} + \frac{\sqrt{\eps (t-1) n_1n_2}}{t}.
\end{equation}
\end{lem}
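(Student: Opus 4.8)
The plan is to reduce the statement to a one‑variable estimate, to apply Lemma~\ref{I1} after pulling one term out of $\sum a_ib_i$, and then to optimize. The hypotheses are invariant under the exchange $(a_j,n_1)\leftrightarrow(b_j,n_2)$, and the two conclusions in \eqref{Lemma2.2} are swapped by it (note $\sqrt{\eps(t-1)n_1n_2}$ is symmetric), so it suffices to prove the bound on the $a_i$'s; and since the hypotheses are invariant under permuting indices, it is enough to treat $i=1$. Put $M:=(n_1+n_2)/t$ and $\delta_0:=\sqrt{\eps(t-1)n_1n_2}$, so the goal is $a_1<(n_1+\delta_0)/t$; assume for contradiction $a_1\ge(n_1+\delta_0)/t$. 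From $\sum_j a_j\le n_1$ we get $a_1\le n_1$, hence $a_1>0$ (if $a_1=0$ the assumption forces $n_1\le0$, contradicting $n_1>0$) and $a_1\ge n_1/t$. From $a_1+b_1<M$ and $b_1\ge0$ we get $b_1<M-a_1$ and $a_1<M$; in particular $b_1<M-n_1/t=n_2/t<n_2$, so $n_2-b_1>0$, and $n_1+n_2-a_1-b_1>n_1+n_2-M=(t-1)M>0$.

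Next I would apply Lemma~\ref{I1} to the index set $\{2,\dots,s\}$, taking $E=\sum_{i\ge2}a_ib_i$ (a lower bound for itself), the same $M$ (legitimate since $a_i+b_i<M$ for $i\ge2$), and $A=n_1-a_1\ge\sum_{i\ge2}a_i$, $B=n_2-b_1\ge\sum_{i\ge2}b_i$, all nonnegative. Its conclusion $E(A+B)\le MAB$, divided by $A+B=n_1+n_2-a_1-b_1>0$, yields $\sum_{i\ge2}a_ib_i\le M(n_1-a_1)(n_2-b_1)/(n_1+n_2-a_1-b_1)$. Adding $a_1b_1$ and invoking $\sum_i a_ib_i\ge(1-\eps)n_1n_2/t$,
\[
(1-\eps)\frac{n_1n_2}{t}\ \le\ a_1b_1+\frac{M\,(n_1-a_1)(n_2-b_1)}{\,n_1+n_2-a_1-b_1\,}\ =:\ g\,,
\]
so it remains to prove $g<(1-\eps)n_1n_2/t$.

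To estimate $g$, view it as a function of $b_1$ through $w:=n_1+n_2-a_1-b_1$, which ranges over $\big((t-1)M,\,n_1+n_2-a_1\big]$ as $b_1$ ranges over $[0,M-a_1)$ and satisfies $n_2-b_1=w-(n_1-a_1)$. A short computation gives
\[
g\ =\ \Big(a_1(n_1+n_2-a_1)+M(n_1-a_1)\Big)\ -\ \phi(w),\qquad \phi(w):=a_1w+\frac{M(n_1-a_1)^2}{w},
\]
with the first bracket constant in $w$. Since $\phi'(w)=a_1-M(n_1-a_1)^2/w^2>0$ exactly when $w>w^\ast:=(n_1-a_1)\sqrt{M/a_1}$, and $w^\ast\le(t-1)M$ — after squaring this reads $(n_1-a_1)^2\le(t-1)^2Ma_1$, which holds because $a_1\ge n_1/t$ gives $(n_1-a_1)^2\le n_1^2(t-1)^2/t^2$ while $(t-1)^2Ma_1\ge(t-1)^2(n_1/t)(n_1/t)$ — we conclude that $\phi$ is strictly increasing on the admissible range $w>(t-1)M\ge w^\ast$. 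Hence $g$ is strictly decreasing there, so $g$ is strictly less than the value $L$ of the right‑hand side at $w=(t-1)M$.

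Finally I would compute $L$. Using $n_1+n_2=tM$ one simplifies $a_1(n_1+n_2-a_1)-a_1(t-1)M=a_1(M-a_1)$, so $L=a_1(M-a_1)+M(n_1-a_1)-(n_1-a_1)^2/(t-1)$; writing $a_1=(n_1+\delta)/t$ with $\delta:=ta_1-n_1\ge0$, a direct calculation collapses this to the identity $L=n_1n_2/t-\delta^2/(t(t-1))$. Since $a_1\ge(n_1+\delta_0)/t$ gives $\delta\ge\delta_0$, we get $\delta^2\ge\delta_0^2=\eps(t-1)n_1n_2$, hence $L\le(1-\eps)n_1n_2/t$. Therefore $g<L\le(1-\eps)n_1n_2/t$, contradicting the displayed lower bound on $g$; this proves $a_1<(n_1+\delta_0)/t$, and the lemma follows. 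The main obstacle is the last two paragraphs: recognizing that the tail sum $\sum_{i\ge2}a_ib_i$ is controlled exactly by the ``proportional'' extremal case of Lemma~\ref{I1}, that the worst admissible choice of $b_1$ is the boundary value $b_1\to M-a_1$ (equivalently $w\to(t-1)M$), and that the boundary value of $g$ collapses precisely to $n_1n_2/t-(ta_1-n_1)^2/(t(t-1))$; the remaining estimates are routine.
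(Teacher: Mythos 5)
Your proof is correct and follows essentially the same route as the paper: isolate the index $i=1$, apply Lemma~\ref{I1} to the remaining indices with $A=n_1-a_1$, $B=n_2-b_1$, and then eliminate $b_1$ by pushing it to the extreme $b_1\to M-a_1$ (which the paper does algebraically by adding the nonpositive term $(a_1+b_1-M)(ta_1-n_1)$, and you do via monotonicity in $w$), ending in the same quadratic bound on $a_1$. Your single application of Lemma~\ref{I1} plus the exact identity $L=n_1n_2/t-(ta_1-n_1)^2/(t(t-1))$ is a modest streamlining that also absorbs the paper's special cases $a_1=n_1$ and $b_1=n_2$, but it is not a genuinely different argument.
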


\begin{proof}
We prove the statement only for $a_1$, as the proofs for other $a_i$'s and $b_i$'s are symmetric.

First, we handle the case $a_1=n_1$.
Then $a_2=\dots=a_s=0$ so the first constraint gives $a_1b_1=n_1b_1\geq (1 - \eps) \frac{n_1n_2}{t}$.
Hence $(1 - \eps)n_2/t \leq b_1$. Combining this with the last constraint we get
\[    n_1 + (n_2/t)  - (\eps n_2)/t \leq a_1+b_1< (n_1/t)+(n_2/t).
  \]
Rearranging we have $(t-1)n_1<   \sqrt{\eps (t-1) n_1n_2}$.
So the value of the upper bound for $a_1$ in~\eqref{Lemma2.2} exceeds $n_1$, so the inequality holds.

Second, consider the case $b_1=n_2$. Then  the last constraint implies $a_1< (n_1+n_2)/t -b_1 =(n_1+n_2)/t -n_2< n_1/t$, so~\eqref{Lemma2.2} holds.
From now on, we may suppose that $n_1-a_1$ and $n_2-b_1$ are both positive.

Third, suppose that $\sum_{i=2}^s a_ib_i \geq \frac{(n_1 -a_1) (n_2 - b_1)}{t-1}$.
Let $M:= \max_{2\leq i \leq s} \{a_i + b_i\}$, $A = n_1 - a_1$, $B= n_2 - b_1$. Then by Lemma~\ref{I1}, we obtain
\[\frac{(n_1 -a_1) (n_2 - b_1)}{t-1} (n_1 - a_1 + n_2 - b_1) \leq M (n_1 -a_1)(n_2 - b_1).\]
Simplify by the positive term $(n_1-a_1)(n_2-b_1)$
 \[M \geq \frac{n_1 - a_1 + n_2 - b_1}{t-1} \geq \frac{n_1 + n_2 - (n_1 + n_2)/t}{t-1} 
 = \frac{n_1 + n_2}{t},\] a contradiction.

Therefore, in the last case we consider, we may assume
\[\frac{(n_1 - a_1) (n_2 - b_1)}{t-1} + a_1b_1 > \sum_{i=1}^s a_i b_i \geq (1 - \eps)\frac{n_1n_2}{t}.\]
Rearranging, we get
\begin{eqnarray*}
(n_1 -a_1) (n_2 - b_1) + (t-1)(a_1b_1) & > & (1-\eps)\frac{(t-1)(n_1n_2)}{t}\\\Rightarrow \enskip n_1n_2 - n_1 b_1 - n_2 a_1 + t a_1b_1 & > & n_1n_2 - \frac{n_1n_2}{t} -\eps \frac{(t-1)n_1n_2}{t}\\
\Rightarrow \enskip \frac{n_1n_2}{t} + \eps\frac{(t-1)n_1n_2}{t} & > & n_2 a_1 -b_1(ta_1 -n_1).
\end{eqnarray*}
If $a_1 < n_1/t$, then we are done. So assume $a_1 \geq n_1/t$ (so $ta_1 - n_1\geq 0$). We add the non-positive term $(a_1+ b_1 - (n_1 + n_2)/t)(ta_1 - n_1)$ to the right hand side to obtain
\begin{eqnarray*}
 \frac{n_1n_2}{t} + \eps\frac{(t-1)n_1n_2}{t} & > & n_2 a_1 -b_1(ta_1 -n_1) + (a_1+ b_1 - \frac{n_1 + n_2}{t})(ta_1 - n_1)\\
 & = & n_2a_1 + ta_1^2 - a_1n_1 -n_1a_1 + \frac{n_1^2}{t} -n_2a_1 + \frac{n_1n_2}{t}\\
 \Rightarrow \enskip 0 & > & ta_1^2 - 2n_1 a_1 + \left( \frac{n_1^2}{t} - \eps \frac{(t-1)n_1n_2}{t}\right)
\end{eqnarray*}
Solving for $a_1$, we obtain
\[a_1 < \frac{2n_1 + \sqrt{4n_1^2 - 4(n_1^2 -\eps (t-1)n_1n_2)}}{2t} = \frac{n_1 + \sqrt{\eps(t-1)n_1n_2}}{t}.\]
\end{proof}

\begin{lem}\label{I2}
Fix $\eps \geq 0$, integers $1 \leq t \leq s$, and reals $a_1, \ldots, a_s, b_1, \ldots, b_s \geq 0$ such that\newline
---
$a_1 \geq \ldots \geq a_s \geq 0$, \;\;
\newline ---
$\sum_{i=1}^s a_i = n_1$,\;\;
$\sum_{i=1}^s b_i = n_2>0$,\;\;
\newline ---
for all $i \in [s]$, $a_i +b_i \leq (n_1+n_2)/t$, \;\;
\newline ---
$\sum_{i=1}^s a_i b_i \geq  (1-\eps)n_1n_2 /t$ .
\newline
Let $a := a_{t+1} + \ldots + a_s$. 
Then \[a \leq \eps n_1 \frac{n_1 + n_2}{n_2}.\] In particular, if $n_1 \leq n_2$, then $a \leq 2\eps n_1$.
\end{lem}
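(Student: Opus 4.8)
The plan is to bound the ``tail mass'' $a = a_{t+1} + \dots + a_s$ by showing that the vertices (indices) contributing to the tail are inefficient at building up the product sum $\sum a_i b_i$, so if $a$ were too large we could not reach the required threshold $(1-\eps)n_1n_2/t$. First I would split the index set into the ``top'' part $T = \{1, \dots, t\}$ and the ``tail'' part $S' = \{t+1, \dots, s\}$, and correspondingly write $\sum_{i=1}^s a_i b_i = \sum_{i \in T} a_i b_i + \sum_{i \in S'} a_i b_i$. The key structural observation is that for $i \in S'$ we have $a_i \le a_t \le a_j$ for every $j \in T$; more usefully, since $a_1 \ge \dots \ge a_s$, the average of $a_1, \dots, a_t$ is at least $a_i$ for each tail index. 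This lets me compare the tail's contribution against a ``spread out'' version of the mass.

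The cleanest route is to bound each piece crudely by the degree-type constraint $a_i + b_i \le (n_1+n_2)/t =: M$. For the top part, $\sum_{i \in T} a_i b_i \le \sum_{i \in T} a_i (M - a_i) \le M \sum_{i\in T} a_i - (\text{something})$, but actually the simplest bound is just $\sum_{i \in T} a_i b_i \le \tfrac{1}{4} t M^2$ or, more to the point, $\sum_{i \in T} a_i b_i \le M \sum_{i \in T} a_i$. For the tail, I want an upper bound in terms of $a$ and $n_2$: since $a_i \le a_t$ for $i \in S'$ and $a_t \le n_1/t$ (because $a_1, \dots, a_t$ are the $t$ largest and sum to at most $n_1$), we get $\sum_{i \in S'} a_i b_i \le a_t \sum_{i \in S'} b_i \le \tfrac{n_1}{t}\, n_2$. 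That bound alone is not quite enough, so instead I would keep the product sum exact on the tail and combine: write $\sum_{i \in S'} a_i b_i \le \tfrac{n_1}{t}\big(\sum_{i \in S'} b_i\big)$ and bound $\sum_{i \in T} a_i b_i$ using $b_i \le M - a_i$ and $\sum_{i\in T} a_i = n_1 - a$. Assembling, $(1-\eps)\tfrac{n_1 n_2}{t} \le \sum_{i\in T} a_i b_i + \tfrac{n_1}{t}\sum_{i \in S'} b_i$, and since $\sum_{i\in S'} b_i \le n_2$ while $\sum_{i\in T} a_i b_i$ loses a term proportional to $a$ relative to the ``no tail'' case, rearranging should isolate $a \le \eps n_1 \frac{n_1+n_2}{n_2}$. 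The final ``in particular'' is immediate: if $n_1 \le n_2$ then $\frac{n_1+n_2}{n_2} \le 2$.

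The main obstacle I anticipate is getting the top-part bound $\sum_{i\in T} a_i b_i$ sharp enough: a naive bound wastes the factor $t$ in $M = (n_1+n_2)/t$ and gives only $a = O(\eps t n_1)$ rather than $O(\eps n_1)$. The fix should be to not bound the top part at all but rather to exploit that, were the tail empty, the maximum of $\sum a_i b_i$ subject to $\sum a_i = n_1$, $\sum b_i = n_2$, $a_i + b_i \le M$ is exactly $n_1 n_2 / t$ (attained by $a_i = n_1/t$, $b_i = n_2/t$ for $i \in [t]$), and that moving $a$ units of mass into the tail forces a deficit in the product sum of at least $\tfrac{n_2}{n_1+n_2}\, a \cdot \tfrac{n_1+n_2}{t} \cdot$(something) — i.e. a deficit of order $a n_2 / n_1$ after normalization. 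Making this ``deficit'' estimate rigorous, probably via a smoothing/exchange argument on the $(a_i, b_i)$ pairs or a direct application of Lemma~\ref{I1} to the tail indices, is the crux; once the deficit is shown to be at least $\tfrac{a n_2}{n_1(n_1+n_2)/t^{?}}\cdot(\text{const})$ matched against the allowed slack $\eps n_1 n_2 / t$, the stated bound drops out.
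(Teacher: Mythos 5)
Your proposal has the right ingredients in view (split into top $t$ indices and tail, use Lemma~\ref{I1}, an exchange/smoothing idea), but the step you actually write down does not work, and the step that would work is exactly the one you leave as ``the crux.'' Concretely: bounding the tail by $\sum_{i>t} a_ib_i \leq \frac{n_1}{t}\sum_{i>t}b_i$ and the top either by $b_i\leq M-a_i$ or by Lemma~\ref{I1} with $B=\sum_{i\leq t}b_i$ is too lossy to isolate $a$. For instance, with $n_1=n_2$ and $\sum_{i>t}b_i$ close to $n_2$, your assembled right-hand side is at least $n_1n_2/t$ for \emph{every} value of $a\in[0,n_1]$ (and with the Lemma~\ref{I1} version, optimizing over $B_T=\sum_{i\leq t}b_i$ gives a maximum of $\frac{n_1n_2}{t}+\frac{(3-2\sqrt 2)(n_1-a)n_2}{t}>\frac{n_1n_2}{t}$ at an interior $B_T$), so the inequality $(1-\eps)\frac{n_1n_2}{t}\leq \text{RHS}$ never forces a contradiction and no bound on $a$ follows — not even $O(\eps t n_1)$. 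The statement ``rearranging should isolate $a\leq \eps n_1\frac{n_1+n_2}{n_2}$'' is therefore unsubstantiated for the chain you wrote.

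The missing idea (the paper's proof) is not to discard or crudely bound the tail's $b$-mass but to \emph{transfer} it onto the top indices: greedily build $b_1',\ldots,b_t'$ with $b_i'\geq b_i$, $\sum_{i=1}^t b_i'=n_2$, and $a_i+b_i'\leq M:=(n_1+n_2)/t$; such a transfer is always possible because whenever some $b_j>0$ with $j>t$, one has $\sum_{i\leq t}(a_i+b_i)\leq n_1+n_2-b_j=tM-b_j$, so some top slot has room. Since the $a_i$ are non-increasing, moving $b$-mass from a tail index to a top index only increases the product sum, so $\sum_{i=1}^t a_ib_i'\geq (1-\eps)n_1n_2/t$. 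Now a single application of Lemma~\ref{I1} to the top indices with $A=n_1-a$, $B=n_2$, $E=(1-\eps)n_1n_2/t$ yields $\frac{(1-\eps)n_1n_2}{t}(n_1+n_2-a)\leq \frac{n_1+n_2}{t}(n_1-a)n_2$, which rearranges to $a(n_2+\eps n_1)\leq \eps n_1(n_1+n_2)$, i.e.\ the claim. Note also that your alternative suggestion of applying Lemma~\ref{I1} ``to the tail indices'' points at the wrong target: the lemma must be applied to the top indices after the redistribution, with the full $B=n_2$, which is precisely what makes the factor $t$ cancel and gives $a=O(\eps n_1)$ rather than something weaker.
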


\begin{proof}
 We construct a new sequence $b_1', \ldots, b_s'$ with $b_i' \geq b_i$ for $i \in [t]$, $b_j'=0$ for $t< j\leq s$,  such that $\sum_{i=1}^t b_i' = \sum_{i=1}^s b_i =n_2$, and $a_i + b_i' \leq (n_1+ n_2)/t=: M$ for all $i \in [t]$.
Note that these conditions together with the fact that the $a_i$'s are non-increasing imply that $\sum_{i=1}^t a_ib_i' \geq \sum_{i=1}^s a_ib_i$.

We build our sequence greedily starting with $b_1, \ldots, b_s$.
Define a set $I\subseteq [s]$ as follows
\smallskip\newline \smallskip \centerline{$I(b_1, \dots, b_s): = \{i\in [t], a_i+b_i< M \} \cup \{j: j> t,\,  b_j>0 $\}.
}
If for all $j \geq t+1$, $b_j=0$, then we let $b_1', \ldots, b_s' = b_1, \ldots, b_s$ and we are done.
So suppose some $j \geq t+1$ satisfies $b_j \neq 0$.
Then there exists $i \in [t]$ with $b_i + a_i < M$ because $\sum_{i=1}^t (a_i+b_i)\leq n_1+n_2-b_j= tM -b_j$.
If $a_i+b_i+b_j\leq M$ then we update $b_i' = b_i + b_j$, $b_j' = 0$ and otherwise $b_k'=b_k$ for all $k \in [s]\setminus\{i,j\}$.
If $a_i+b_i+b_j > M$ then we update $b_i' = b_i + M - (a_i + b_i) = M-a_i$, $b_j' = b_j - (M - (a_i + b_i))$ and $b_k'=b_k$ for $k\in [s]\setminus \{ i,j\}$.
In both cases we get $I(b_1', \dots, b_s') \subsetneq I(b_1, \dots, b_s)$, so one can continue this process at most $s$ steps until we get
$I(b_1', \dots, b_s') \subset [t]$.

So suppose we have found a sequence $b_1', \ldots, b_t'$ as desired. Apply Lemma~\ref{I1} on the sequences $a_1, \ldots, a_t$ and $b_1', \ldots, b_t'$. We have $\sum_{i=1}^t a_i= n_1 - a=:A$, $\sum_{i=1}^t b_i' =n_2=: B$, $\sum_{i=1}^t a_ib_i' \geq \sum_{i=1}^s a_ib_i \geq (1-\eps)n_1n_2 /t =: E$, and $a_i + b_i' \leq M$ for all $i \in [t]$. Therefore,
 \[\frac{(1- \eps)n_1n_2}{t} (n_1 + n_2 - a) \leq \frac{n_1 + n_2}{t} (n_1 - a)n_2\]

 Rearranging and solving for $a$, we get
 \begin{eqnarray*}
a (n_2 + \eps n_1) & \leq & \eps n_1^2 + \eps n_1 n_2 \\
\Rightarrow\enskip  a & \leq & \eps n_1 \frac{n_1 + n_2}{n_2 + \eps n_1 } \leq \eps n_1 \frac{n_1 + n_2}{n_2}.
 \end{eqnarray*}
\end{proof}

\section{Proof of Theorem~\ref{main2} for almost complete bipartite graphs}

\begin{proof}
Let $G$ be an $X_1,X_2$-bipartite graph with $|X_1| = n_1$, 
 $|X_2| = n_2,$ and $n_2 \geq n_1\geq 1$.
Consider any coloring of the edges of $G$ with colors $1, \ldots, t$.
For a color $i \in [t]$, we denote by $G^i$ the spanning subgraph of edges colored with $i$. Suppose that every monochromatic component has less than $(n_1+n_2)/t$ vertices.
We claim that $|E(G^i)| < n_1n_2/t$. 
Indeed, 
 let $C_1, \ldots, C_s$ be the connected components of  $G^i$. For $j \in [s]$, let $a_j = |C_j \cap X_1|$, $b_j = |C_j \cap X_2|$. Then $E:=|E(G^i)| \leq \sum_{j=1}^s a_jb_j$. Apply Lemma~\ref{I1} with $A = n_1$, $B = n_2$, $M = (n_1+n_2-1)/t$. We get
   \[E \leq (n_1+n_2-1)/t \cdot (n_1 + n_2)^{-1} \cdot(n_1n_2) < n_1n_2/t,\]
as desired.

Let $\eps_i$ be such that $|E(G^i)| = (1-\eps_i)n_1n_2/t$.
By Lemma~\ref{I5}, 
 a connected component of color $i$ contains at most $\frac{n_\alpha}{t} + \frac{\sqrt{\eps_i(t-1)n_1n_2}}{t}$ vertices from $X_\alpha$, $\alpha \in \{1,2\}$. Therefore, for any $i\in [t]$, $x \in X_1$ and $y \in X_2$,
\begin{equation}\label{eq4}
d_{G^i}(x) < \frac{n_2}{t} + \frac{\sqrt{\eps_i(t-1)n_1n_2}}{t}, \qquad
d_{G^i}(y) < \frac{n_1}{t} + \frac{\sqrt{\eps_i(t-1)n_1n_2}}{t} .
\end{equation}

Since $|E(G)| \geq (1-\gammadelta)n_1n_2$, we have $\sum_{i=1}^t \eps_i \leq t\gammadelta$.
Without loss of generality, suppose color $1$ satisfies $\eps_1 \leq \gammadelta$. Let $C_1, \ldots, C_s$ be the vertex sets of the connected components of color $1$, ordered so that $|X_1\cap C_1 | \geq \ldots\geq  |X_1 \cap C_s|$. Define $a_j, b_j$ as before. Note that $s \geq t+1$, since the $C_j$'s cover $V(G)$ and $|C_j| < (n_1 + n_2)/t$ for all $j$.
By Lemma~\ref{I2}, $a:= a_{t+1} + \ldots + a_s \leq 2\eps_1 n_1$.

{\bf Case 1}: $X_2 \cap (C_{t+1} \cup \ldots \cup C_s) \neq \emptyset$. Fix a vertex $y$ in this set. Then $d(G^1)(y) \leq 2 \eps_1 n_1$. We get
\begin{eqnarray*}
(1-\gammadelta)n_1 \;\;  \leq \;\; d_G(y) &< &  2\eps_1 n_1 + \frac{n_1(t-1)}{t} + \sum_{i=2}^t \frac{\sqrt{\eps_i(t-1)n_1n_2}}{t}\\
& \leq & 2\gammadelta n_1 + n_1 - \frac{n_1}{t} + \frac{\sqrt{(t-1)^2(\sum_{i=2}^t \eps_i)n_1n_2}}{t}\\
& \leq & 2\gammadelta n_1 + n_1 - \frac{n_1}{t} + \sqrt{\gammadelta t n_1n_2} \cdot \frac{t-1}{t}.\\
\end{eqnarray*}
Here we used the fact that $\sum_{i=2}^t \frac{\sqrt{\eps_i}}{t-1} \leq \sqrt{\frac{\sum_{i=2}^t \eps_i}{t-1}}$ because 
$\sqrt x$ is a concave function. Therefore
\[\frac{n_1}{t} < n_13\gammadelta + \sqrt{\gammadelta t n_1n_2} \cdot \frac{t-1}{t} \leq n_1 3\frac{(n_1/n_2)}{t^3} + \sqrt{t\frac{(n_1/n_2)}{t^3}n_1n_2} \cdot \frac{t-1}{t} \leq \frac{n_1}{t}\left( \frac{3}{t^2} + \frac{t-1}{t}\right),\]
a contradiction when $t \geq 3$.

{\bf Case 2}: $X_2 \cap (C_{t+1} \cup \ldots \cup C_s) = \emptyset$. Let $x \in X_1 \cap (C_{t+1} \cup \ldots \cup C_s)$. By the case, $x$ is not incident to an edge of color $1$. So we instead obtain
\begin{eqnarray*}
(1-\gammadelta)n_2 \;\; \leq \;\; d_G(x) &< & \frac{n_2(t-1)}{t} + \sum_{i=2}^t \frac{\sqrt{\eps_i(t-1)n_1n_2}}{t}\\
& \leq &  n_2 - \frac{n_2}{t} + \sqrt{\gammadelta t n_1n_2}\cdot \frac{t-1}{t}.\\
\end{eqnarray*}
This implies that \[\frac{n_2}{t} < n_2 \gammadelta + \sqrt{\gammadelta t n_1n_2}\cdot \frac{t-1}{t} \leq n_2 \frac{(n_1/n_2)}{t^3} + \sqrt{t\frac{(n_1/n_2)}{t^3}n_1n_2}\cdot \frac{t-1}{t} \leq \frac{n_1}{t} \left( \frac{1}{t^2} + \frac{t-1}{t}\right),\]
a contradiction since $n_1 \leq n_2$ and $t \geq 3$.
\end{proof}

\section{Proof of Corollary~\ref{main} for almost complete graphs}

\begin{proof}
Let $G$ be an $n$-vertex graph with minimum degree at least $(1- \gammadelta) n$, and suppose the edges of $G$ are colored with colors $0, 1, \ldots, t$ such that each monochromatic connected component has size less than $n/t$. Again, we use $G^i$ to refer to the spanning subgraph of the edges of color $i$.

%

Let $V_1, \ldots, V_r$ be the vertex sets of the connected components of $G^0$. We will split the vertex set into two almost equal parts $X_1$ and $X_2$ such that the size of each part is in the range $[n(\frac{1}{2} -\frac{1}{2t}), n(\frac{1}{2} + \frac{1}{2t})]$, and each set $V_i$ is contained either entirely in $X_1$ or entirely in $X_2$. To see that this is possible, arbitrarily add entire sets $V_i$ to $X_1$ until $|X_1| < n(\frac{1}{2} +\frac{1}{2t})$ but adding any additional set to $X_1$ causes the size of $X_1$ to be at least
$n(\frac{1}{2} +\frac{1}{2t})$. Then let $X_2 = V(G) - X_1$. At this point, $|X_1| >  n(\frac{1}{2} -\frac{1}{2t})$, otherwise all sets $V_j$ not contained in $X_1$ have size at least $n/t$,
a contradiction.

Now let $|X_1| = n_1$, $|X_2| = n_2$, where without loss of generality, $|X_1| \leq |X_2| < 2|X_1|$ (and $n= n_1 + n_2$). By construction, there are no edges of color $0$ between $X_1$ and $X_2$. Hence, the edges of the bipartite subgraph $G[X_1,X_2]$ are colored with $t$ colors.
(Here $G[X,Y]$ denotes the spanning bipartite subgraph of $G$ in which we include only edges with endpoints in both $X$ and $Y$.)

For simplicity, set $G' = G[X_1,X_2]$. Let $x \in X_1$ and $y \in X_2$. Then \[d_{G'}(x) \geq n_2 - \gammadelta n = n_2 - \gammadelta(n_1 + n_2) \geq (1- 2\gammadelta)n_2,\] and \[d_{G'}(y) \geq n_1 - \gammadelta n = n_1 - \gammadelta(n_1 + n_2) \geq n_1 - \gammadelta(n_1 + 2n_1) = (1-3\gammadelta)n_1.\]

 Since $G'$ does not have a monochromatic component of size at least $n/t = (n_1 + n_2)/t$, Theorem~\ref{main2} implies that \[3\gammadelta \geq \frac{(n_1/n_2)}{t^3} > \frac{1/2}{t^3} = \frac{1}{2t^3}.\]

We get a contradiction when $\gammadelta \leq 1/(6t^3)$.

\end{proof}


\end{document}